\newtheorem{theorem}{Theorem}[section]
\newtheorem{lemma}[theorem]{Lemma}
\newtheorem{corollary}[theorem]{Corollary}
\theoremstyle{definition}
\theoremstyle{remark}
\newtheorem*{acknowledgement}{Acknowledgement}
\begin{document}
\title{A note on the commutativity of inverse limit and orbit map}
\author{Mahender Singh}
\address{Institute of Mathematical Sciences, C I T Campus, Taramani, Chennai 600113, India.}
\email{mahen51@gmail.com}
\subjclass[2010]{Primary 54B25; Secondary 57S10, 54B15}
\keywords{Equivariant map; group action; inverse limit; orbit space}

\begin{abstract}
We show that the inverse limit and the orbit map commute for actions of compact groups on compact Hausdorff spaces.
\end{abstract}
\maketitle

\section{Introduction}
This note is motivated by the following example of Bredon \cite[p.145]{Bredon2}. Let $\mathbb{S}^2$ be the 2-sphere identified with the unreduced suspension of the circle $\mathbb{S}^1= \{z \in \mathbb{C}~;~ |z|=1 \}$, and let $f:\mathbb{S}^2 \to \mathbb{S}^2$ be the suspension of the map $z \mapsto z^3$ from $\mathbb{S}^1 \to \mathbb{S}^1$. Then $f$ commutes with the antipodal involution on $\mathbb{S}^2$. If $\Sigma$ is the inverse limit of the inverse system $$\cdots \stackrel{f}\rightarrow \mathbb{S}^{2} \stackrel{f}\rightarrow \mathbb{S}^{2} \stackrel{f}\rightarrow \mathbb{S}^{2},$$ then $\Sigma/ \mathbb{Z}_2$ is homeomorphic to $\varprojlim \mathbb{R}P^2$.

We show that this is also true in a more general setting. More precisely, we show that the inverse limit and the orbit map commute for actions of compact groups on compact Hausdorff spaces. The proof of the result is simple, but does not seem to be available in the literature. Before we prove the result, we recall some basic definitions that will be used in the note.

An inverse system of topological spaces, denoted by $\{X_{\alpha}, \pi_{\alpha}^{ \beta},\Lambda\}$, consists of a directed set $\Lambda$, a family of topological spaces $\{X_{\alpha}\}_{\alpha \in \Lambda}$, and a collection of continuous maps $\pi_{\alpha}^{\beta}:X_{\beta} \to X_{\alpha}$ for $\alpha < \beta $ in $\Lambda$ satisfying the following:
\begin{enumerate}
\item $\pi_{\alpha}^{ \alpha}:X_{\alpha} \to X_{\alpha} $ is the identity map for all $\alpha\; \in \Lambda$;
\item $\pi_{\alpha}^{ \beta} \pi_{\beta}^{ \gamma}\;=\; \pi_{\alpha}^{ \gamma}$ for all $\alpha < \beta< \gamma$ in $\Lambda$.
\end{enumerate}
The maps $\pi_{\alpha}^{ \beta}$ are called bonding maps. Given an inverse system $ \{X_{\alpha}, \pi_{\alpha}^{ \beta},\Lambda \}$ of topological spaces, let $\varprojlim X_{\alpha}$ (possibly empty) be the subset of $\Pi_{\alpha \in \Lambda} X_{\alpha}$ consisting of elements $(x_{\alpha})$ such that $x_{\alpha}=\pi_{\alpha}^{ \beta}(x_{\beta})$ for $\alpha < \beta $. It is given the subspace topology from the product space $\Pi_{\alpha \in \Lambda} X_{\alpha}$ and is called the inverse limit of the inverse system $\{X_{\alpha}, \pi_{\alpha}^{ \beta}, \Lambda \}$. We denote by $\pi_{\beta}: \varprojlim X_{\alpha} \to X_{\beta}$, the restriction of the canonical projection $\Pi_{\alpha \in \Lambda} X_{\alpha}\to X_{\beta}$. The following results are well known and we refer the reader to \cite{Eilenberg} for more details.

\begin{theorem}\cite[p.217]{Eilenberg}\label{theorem1.1}
The inverse limit of an inverse system of non-empty compact Hausdorff topological spaces is a non-empty compact Hausdorff topological space.
\end{theorem}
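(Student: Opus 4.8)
The plan is to realize $\varprojlim X_{\alpha}$ as a closed subspace of the product $\Pi_{\alpha \in \Lambda} X_{\alpha}$, and then to combine Tychonoff's theorem with a finite intersection property argument to get compactness together with non-emptiness.

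First I would note that by Tychonoff's theorem the product $P := \Pi_{\alpha \in \Lambda} X_{\alpha}$ is compact, and it is Hausdorff since each $X_{\alpha}$ is. For each pair $\alpha < \beta$ in $\Lambda$, consider the two maps $P \to X_{\alpha}$ given by $(x_{\gamma}) \mapsto x_{\alpha}$ and $(x_{\gamma}) \mapsto \pi_{\alpha}^{\beta}(x_{\beta})$; both are continuous, being composites of canonical projections with a bonding map. Since $X_{\alpha}$ is Hausdorff, the set $A_{\alpha \beta} := \{(x_{\gamma}) \in P ~;~ x_{\alpha} = \pi_{\alpha}^{\beta}(x_{\beta})\}$ on which the two maps agree is closed in $P$, being the preimage of the (closed) diagonal of $X_{\alpha} \times X_{\alpha}$. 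By the definition of the inverse limit, $\varprojlim X_{\alpha} = \bigcap_{\alpha < \beta} A_{\alpha \beta}$, so it is closed in $P$; as a closed subspace of the compact Hausdorff space $P$ it is itself compact and Hausdorff.

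The remaining, and really the only substantive, point is non-emptiness. Here I would invoke that a family of closed subsets of a compact space having the finite intersection property has non-empty intersection, applied to $\{A_{\alpha \beta}\}_{\alpha < \beta}$. Given finitely many pairs $\alpha_{i} < \beta_{i}$, $i = 1, \dots, n$, the directedness of $\Lambda$ yields $\gamma \in \Lambda$ with $\gamma \geq \beta_{i}$ for all $i$; choose any $x_{\gamma} \in X_{\gamma}$, which is possible because $X_{\gamma} \neq \emptyset$. Define a point $(y_{\delta}) \in P$ by setting $y_{\delta} = \pi_{\delta}^{\gamma}(x_{\gamma})$ whenever $\delta \leq \gamma$ and letting $y_{\delta}$ be an arbitrary element of $X_{\delta}$ otherwise. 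Then the composition law $\pi_{\alpha_{i}}^{\beta_{i}} \pi_{\beta_{i}}^{\gamma} = \pi_{\alpha_{i}}^{\gamma}$ gives $\pi_{\alpha_{i}}^{\beta_{i}}(y_{\beta_{i}}) = \pi_{\alpha_{i}}^{\gamma}(x_{\gamma}) = y_{\alpha_{i}}$, so $(y_{\delta}) \in \bigcap_{i=1}^{n} A_{\alpha_{i} \beta_{i}}$. Hence the family has the finite intersection property, and therefore $\varprojlim X_{\alpha} = \bigcap_{\alpha < \beta} A_{\alpha \beta}$ is non-empty.

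I expect the finite intersection property step to be the main obstacle, or at any rate the only place where compactness and non-emptiness of the spaces $X_{\alpha}$ are genuinely used; the closedness and Hausdorffness of $\varprojlim X_{\alpha}$ are purely formal once the description $\varprojlim X_{\alpha} = \bigcap_{\alpha<\beta} A_{\alpha\beta}$ as an intersection of equalizers is available. One should also record the harmless convention that the directed set $\Lambda$ is non-empty, so that $P$ is non-empty to begin with and the argument above has something to start from.
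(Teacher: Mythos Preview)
Your argument is correct and is in fact the standard proof of this result: realize the inverse limit as an intersection of closed ``equalizer'' sets $A_{\alpha\beta}$ inside the Tychonoff product, and then use the finite intersection property together with directedness of $\Lambda$ to show the intersection is non-empty. Each step is sound, including the small care you take to choose arbitrary coordinates at indices not below $\gamma$.

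As for comparison with the paper: the paper does not give its own proof of this theorem at all. It is simply quoted, with the citation \cite[p.217]{Eilenberg}, as a known background result (Theorem~1.1) to be used later in the proof of Lemma~2.2. So there is no in-paper argument to compare against; your proof is essentially the one appearing in the cited reference of Eilenberg and Steenrod.
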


\begin{theorem}\cite[p.219]{Eilenberg}\label{theorem1.2}
Let $X$ be a topological space and $\{X_{\alpha}, \pi_{\alpha}^{ \beta},\Lambda\}$ be an inverse system of topological spaces. If for each $\alpha \in \Lambda$ there is a continuous map $\psi_{\alpha}:X \to X_{\alpha}$ such that $\pi_{\alpha}^{\beta} \psi_{\beta}=\psi_{\alpha}$ for $\alpha < \beta$, then there is a unique continuous map $\psi:X \to \varprojlim X_{\alpha}$.
\end{theorem}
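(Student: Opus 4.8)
The plan is to construct $\psi$ from its coordinate functions and then verify the three points one needs: that the image lands in $\varprojlim X_{\alpha}$, that $\psi$ is continuous, and that it is the unique map satisfying $\pi_{\beta}\psi=\psi_{\beta}$ for all $\beta$ (the property with respect to which uniqueness is asserted). First I would define a function $\psi:X \to \Pi_{\alpha \in \Lambda}X_{\alpha}$ by declaring the $\alpha$-th coordinate of $\psi(x)$ to be $\psi_{\alpha}(x)$; equivalently, $\psi$ is the map into the product determined by the family $\{\psi_{\alpha}\}$ so that $p_{\alpha}\psi=\psi_{\alpha}$, where $p_{\alpha}:\Pi_{\beta \in \Lambda}X_{\beta}\to X_{\alpha}$ denotes the canonical projection.

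Next I would check that $\psi(X)\subseteq \varprojlim X_{\alpha}$. Fix $x \in X$ and $\alpha<\beta$ in $\Lambda$, and write $\psi(x)=(x_{\gamma})$ with $x_{\gamma}=\psi_{\gamma}(x)$. The compatibility hypothesis gives $\pi_{\alpha}^{\beta}(x_{\beta})=\pi_{\alpha}^{\beta}\psi_{\beta}(x)=\psi_{\alpha}(x)=x_{\alpha}$, which is exactly the relation defining membership in $\varprojlim X_{\alpha}$. Hence $\psi$ corestricts to a function $\psi:X \to \varprojlim X_{\alpha}$, and since $\pi_{\beta}$ is the restriction of $p_{\beta}$, we have $\pi_{\beta}\psi=\psi_{\beta}$ for every $\beta \in \Lambda$ by construction.

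For continuity I would invoke the fact that $\varprojlim X_{\alpha}$ carries the subspace topology from $\Pi_{\alpha \in \Lambda}X_{\alpha}$: the product topology is initial for the projections $p_{\alpha}$, so the map $X \to \Pi_{\alpha \in \Lambda}X_{\alpha}$ with coordinates $\psi_{\alpha}$ is continuous because each $\psi_{\alpha}$ is; and a continuous map whose image lies in a subspace is continuous into that subspace. Thus $\psi:X \to \varprojlim X_{\alpha}$ is a map. For uniqueness, suppose $\psi':X \to \varprojlim X_{\alpha}$ also satisfies $\pi_{\beta}\psi'=\psi_{\beta}$ for all $\beta$. Then for each $x \in X$ the $\beta$-th coordinate of $\psi'(x)$ equals $\psi_{\beta}(x)$, which is the $\beta$-th coordinate of $\psi(x)$; since a point of $\varprojlim X_{\alpha}\subseteq \Pi_{\alpha \in \Lambda}X_{\alpha}$ is determined by its coordinates, $\psi'(x)=\psi(x)$, so $\psi'=\psi$.

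I do not anticipate a genuine obstacle: the argument is the standard verification of a universal property. The only points that merit a little care are making explicit the condition $\pi_{\beta}\psi=\psi_{\beta}$ that $\psi$ is required to satisfy (which the displayed statement abbreviates), and appealing to the subspace-topology characterization of $\varprojlim X_{\alpha}$ rather than re-deriving continuity directly from open sets. Everything else is routine coordinate chasing.
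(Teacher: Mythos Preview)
Your argument is correct and is precisely the standard verification of the universal property of the inverse limit. Note, however, that the paper does not supply its own proof of this theorem: it is quoted as a background result with a citation to Eilenberg--Steenrod \cite[p.~219]{Eilenberg}, so there is nothing in the paper to compare your proof against beyond observing that your construction is exactly the one the cited reference gives.
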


If $\{G_{\alpha}, \nu_{\alpha}^{ \beta},\Lambda\}$ is an inverse system of topological groups ($\nu_{\alpha}^{ \beta}$ are continuous group homomorphisms), then $\varprojlim G_{\alpha}$ is also a topological group with the subspace topology from $\Pi_{\alpha \in \Lambda} G_{\alpha}$ and with the group operation $\big((g_{\alpha}), (h_{\alpha})\big) \mapsto (g_{\alpha} h_{\alpha})$.

An action of a topological group $G$ on a topological space $X$ is a continuous map $G \times X \to X$, written $(g, x)\mapsto gx$ for $g \in G$ and $x \in X$, and satisfying the following:
\begin{enumerate}
\item $ex=x$ for all $x \in X$, where $e \in G$ is the identity element;
\item $(gh)x= g(hx)$ for all $g,~h \in G$ and $x \in X$.
\end{enumerate}

We say that $X$ is a $G$-space if there is an action of $G$ on $X$. For each $x \in X$, the set $\overline{x}= \{ gx~|~ g \in G\}$ is called the orbit of $x$. If $X/G$ is the set of all orbits, then the canonical map $X \to X/G$ given by $x \mapsto \overline{x}$ is called the orbit map and $X/G$ equipped with the quotient topology is called the orbit space. We say that $G$ acts freely on $X$ if for each $x \in X$, $gx=x$ implies that $g=e$. The following result is well known.

\begin{theorem}\cite[p.38]{Bredon2}\label{theorem1.3}
If $G$ is a compact topological group acting on a compact Hausdorff topological space $X$, then $X/G$ is also a compact Hausdorff topological space.
\end{theorem}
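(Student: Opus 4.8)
The plan is to treat the two assertions separately. Compactness of $X/G$ is immediate: the orbit map $q \colon X \to X/G$, $x \mapsto \overline{x}$, is continuous and surjective by construction, so $X/G = q(X)$ is a continuous image of the compact space $X$ and hence compact. The substantive part is the Hausdorff property, and this is where the compactness of $G$ enters.

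Before separating points of $X/G$ I would record two preliminary facts. First, $q$ is an \emph{open} map: for open $U \subseteq X$ one has $q^{-1}\big(q(U)\big) = \bigcup_{g \in G} g\cdot U$, which is a union of open sets since each translation $x \mapsto g\cdot x$ is a homeomorphism of $X$; hence $q(U)$ is open. Second, the orbit equivalence relation
$$R = \{(x,y) \in X \times X \mid y \in \overline{x}\}$$
is closed in $X \times X$. Indeed, $R$ is exactly the image of $G \times X$ under the continuous map $(g,x) \mapsto (x, g\cdot x)$; since $G$ and $X$ are compact, so is $G \times X$, whence $R$ is compact, and as $X \times X$ is Hausdorff this forces $R$ to be closed. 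This is the only step that uses the compactness of $G$, and I expect it to be the crux of the argument.

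To finish, let $\overline{x} \neq \overline{y}$ be distinct orbits, so $(x,y) \notin R$. Since $R$ is closed there are open sets $U \ni x$ and $V \ni y$ in $X$ with $(U \times V) \cap R = \emptyset$. Then $q(U)$ and $q(V)$ are open neighbourhoods of $\overline{x}$ and $\overline{y}$ respectively, by openness of $q$, and they are disjoint: a common point $\overline{z} \in q(U) \cap q(V)$ would yield $u \in U$ and $v \in V$ with $\overline{u} = \overline{z} = \overline{v}$, so $(u,v) \in R \cap (U \times V)$, a contradiction. Hence $X/G$ is Hausdorff, which together with the first paragraph completes the proof.
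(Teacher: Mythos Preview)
Your proof is correct. However, the paper does not actually prove this theorem: it is stated with a citation to Bredon \cite[p.~38]{Bredon2} and used as a black box in the proof of Lemma~2.2, so there is no ``paper's own proof'' to compare against. What you have written is essentially the standard argument (and is in the spirit of what one finds in Bredon): compactness is trivial from continuity and surjectivity of the orbit map, and Hausdorffness follows from the two facts that the orbit map is open and that the orbit relation $R\subseteq X\times X$ is closed, the latter being exactly where compactness of $G$ is used.

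One very minor remark: in showing $R$ closed you invoke compactness of $X$ as well as of $G$, which is fine under the present hypotheses but not strictly necessary. The same conclusion holds for any Hausdorff $G$-space $X$ with $G$ compact, via the tube lemma: $R$ is the image of the compact--to--Hausdorff projection $G\times X\times X\to X\times X$ restricted to the closed set $\{(g,x,y):y=g\cdot x\}$, and projections along a compact factor are closed maps. This is not a gap in your argument---your hypotheses include $X$ compact---just an observation about where the compactness assumptions are really doing work.
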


Let $X$ be a $G$-space, $Y$ be a $H$-space and $\nu:G \to H$ be a topological group homomorphism. Then a continuous map $f:X \to Y$ is called $\nu$-equivariant if $$f(g x)=\nu(g)f(x) ~\textrm{for~ all}~ g\in G ~\textrm{and}~ x \in X.$$ If both $X$ and $Y$ are $G$-spaces, then $f$ is simply called $G$-equivariant. Note that the $\nu$-equivariant map $f$ induces a continuous map $\overline{f}:X/G \to Y/H$ given by $\overline{f}(\overline{x})=\overline{f(x)}$.

If $\{X_{\alpha}, \pi_{\alpha }^{\beta}, \Lambda \}$ is an inverse system of topological spaces and $\{G_{\alpha}, \nu_{\alpha}^{ \beta},\Lambda\}$ is an inverse system of topological groups, where each $X_{\alpha}$ is a $G_{\alpha}$-space and each bonding map $\pi_{\alpha }^{\beta}$ is $\nu_{\alpha }^{\beta}$-equivariant, then we get another inverse system of topological spaces $\{X_{\alpha}/G_{\alpha}, \overline{\pi_{\alpha}^{ \beta}}, \Lambda \}$ by passing to orbit spaces. Also, under above conditions $\varprojlim X_{\alpha}$ is a $\varprojlim G_{\alpha}$-space with the action given by $$(g_{\alpha}) (x_{\alpha})=(g_{\alpha} x_{\alpha})~~\textrm{for}~ (g_{\alpha}) \in \varprojlim G_{\alpha}~\textrm{and}~ (x_{\alpha})\in \varprojlim X_{\alpha}.$$

\section{Commutativity of inverse limit and orbit map}
In view of the above discussion, it is natural to ask when is $(\varprojlim X_{\alpha})/(\varprojlim G_{\alpha})$ homeomorphic to $\varprojlim (X_{\alpha}/G_{\alpha})$. We present the following theorem in this direction.

\begin{theorem}\label{theorem2.1}
Let $\{X_{\alpha}, \pi_{\alpha}^{ \beta}, \Lambda \}$ be an inverse system of non-empty compact Hausdorff topological spaces and let $\{G_{\alpha}, \nu_{\alpha}^{ \beta},\Lambda\}$ be an inverse system of compact topological groups, where each $X_{\alpha}$ is a $G_{\alpha}$-space and each bonding map $\pi_{\alpha }^{\beta}$ is $\nu_{\alpha }^{\beta}$-equivariant. Further, assume that $\Lambda$ has the least element $\lambda$, $G_{\lambda}$ action on $X_{\lambda}$ is free and the bonding map $\nu_{\lambda}^{\alpha}$ is injective for each $\alpha \in \Lambda$. Then there is a homeomorphism $$\psi :(\varprojlim X_{\alpha})/(\varprojlim G_{\alpha}) \to \varprojlim (X_{\alpha}/G_{\alpha}).$$
\end{theorem}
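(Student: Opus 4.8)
The plan is to produce $\psi$ from a single natural map $\phi\colon\varprojlim X_{\alpha}\to\varprojlim(X_{\alpha}/G_{\alpha})$ and then argue that $\psi$ is a continuous bijection between compact Hausdorff spaces, hence a homeomorphism. Writing $q_{\beta}\colon X_{\beta}\to X_{\beta}/G_{\beta}$ for the orbit maps, the composites $q_{\beta}\pi_{\beta}\colon\varprojlim X_{\alpha}\to X_{\beta}/G_{\beta}$ are compatible, since $\overline{\pi_{\alpha}^{\beta}}\,(q_{\beta}\pi_{\beta})=q_{\alpha}\pi_{\alpha}^{\beta}\pi_{\beta}=q_{\alpha}\pi_{\alpha}$ for $\alpha<\beta$ (using $\overline{\pi_{\alpha}^{\beta}}\,q_{\beta}=q_{\alpha}\pi_{\alpha}^{\beta}$ and $\pi_{\alpha}^{\beta}\pi_{\beta}=\pi_{\alpha}$), so by Theorem 1.2 they assemble into a unique continuous map $\phi$, explicitly $\phi\big((x_{\alpha})\big)=\big(\overline{x_{\alpha}}\big)$. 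Because $q_{\alpha}(g_{\alpha}\cdot x_{\alpha})=q_{\alpha}(x_{\alpha})$ for all $(g_{\alpha})\in\varprojlim G_{\alpha}$, the map $\phi$ is constant on $\varprojlim G_{\alpha}$-orbits and therefore factors as $\phi=\psi\circ Q$, where $Q\colon\varprojlim X_{\alpha}\to(\varprojlim X_{\alpha})/(\varprojlim G_{\alpha})$ is the orbit map; since $Q$ is a quotient map, $\psi$ is continuous.

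Next I would note that all the spaces in sight are compact Hausdorff: $\varprojlim X_{\alpha}$ and $\varprojlim G_{\alpha}$ are compact Hausdorff by Theorem 1.1, hence $(\varprojlim X_{\alpha})/(\varprojlim G_{\alpha})$ is compact Hausdorff by Theorem 1.3; and each $X_{\alpha}/G_{\alpha}$ is compact Hausdorff by Theorem 1.3, so $\varprojlim(X_{\alpha}/G_{\alpha})$ is compact Hausdorff by Theorem 1.1. Consequently it suffices to show that $\psi$ is a bijection, equivalently that $\phi$ is surjective and that any two points of $\varprojlim X_{\alpha}$ with the same $\phi$-image lie in one $\varprojlim G_{\alpha}$-orbit.

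For surjectivity, given $\big(\overline{x_{\alpha}}\big)\in\varprojlim(X_{\alpha}/G_{\alpha})$ I would pick representatives $x_{\alpha}$ and set $A_{\alpha}=q_{\alpha}^{-1}(\overline{x_{\alpha}})$, the $G_{\alpha}$-orbit of $x_{\alpha}$, which is non-empty, compact (a continuous image of the compact group $G_{\alpha}$), and Hausdorff; since $\overline{\pi_{\alpha}^{\beta}}(\overline{x_{\beta}})=\overline{x_{\alpha}}$, equivariance gives $\pi_{\alpha}^{\beta}(A_{\beta})\subseteq A_{\alpha}$, so $\{A_{\alpha},\pi_{\alpha}^{\beta}|_{A_{\beta}},\Lambda\}$ is an inverse system of non-empty compact Hausdorff spaces and Theorem 1.1 furnishes an element of its limit, i.e.\ a preimage. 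For injectivity, suppose $\phi\big((x_{\alpha})\big)=\phi\big((y_{\alpha})\big)$, so $x_{\alpha},y_{\alpha}$ lie in the same $G_{\alpha}$-orbit for each $\alpha$; put $T_{\alpha}=\{g\in G_{\alpha}\,:\,g\cdot x_{\alpha}=y_{\alpha}\}$, a non-empty closed (hence compact) subset of $G_{\alpha}$, and observe that $\nu_{\alpha}^{\beta}$-equivariance of $\pi_{\alpha}^{\beta}$ forces $\nu_{\alpha}^{\beta}(T_{\beta})\subseteq T_{\alpha}$. This is where the remaining hypotheses enter: freeness of the $G_{\lambda}$-action makes $T_{\lambda}$ a single point, and then injectivity of $\nu_{\lambda}^{\alpha}$ together with $\nu_{\lambda}^{\alpha}(T_{\alpha})\subseteq T_{\lambda}$ forces every $T_{\alpha}$ to be a single point; hence the inverse system $\{T_{\alpha},\nu_{\alpha}^{\beta}|_{T_{\beta}},\Lambda\}$ has a unique element $(g_{\alpha})\in\varprojlim G_{\alpha}$, and $(g_{\alpha})\cdot(x_{\alpha})=(y_{\alpha})$. (Even without the freeness and injectivity hypotheses, Theorem 1.1 applied to $\{T_{\alpha}\}$ would already produce such a $(g_{\alpha})$.) This completes the proof, since a continuous bijection from a compact space to a Hausdorff space is a homeomorphism.

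The one genuinely substantive point, I expect, is these two ``coherent lifting'' steps: one cannot pick orbit representatives or group elements level by level and hope they fit together, and the remedy in both cases is to repackage the choices as an auxiliary inverse system of non-empty compact Hausdorff spaces and apply Theorem 1.1 (with the freeness and injectivity assumptions collapsing the relevant fibres to points). The verifications that $\{A_{\alpha}\}$ and $\{T_{\alpha}\}$ are bona fide inverse systems, and the continuity bookkeeping around $\phi$ and $\psi$, are routine.
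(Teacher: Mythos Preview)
Your proof is correct and follows the same overall strategy as the paper: construct the natural map $\psi$, establish that both sides are compact Hausdorff, and reduce to showing $\psi$ is a bijection. The paper packages the first two steps as a separate lemma (closed continuous surjection), asserting surjectivity with a bare ``clearly $\psi$ is surjective''; your argument via the auxiliary inverse system $\{A_{\alpha}\}$ of orbits actually fills this in. For injectivity the paper does essentially your computation but elementwise: it picks $g_{\alpha}$ with $x_{\alpha}=g_{\alpha}\cdot y_{\alpha}$, uses freeness at $\lambda$ to pin down $g_{\lambda}=\nu_{\lambda}^{\alpha}(g_{\alpha})$, and then injectivity of $\nu_{\lambda}^{\alpha}$ to force $\nu_{\alpha}^{\beta}(g_{\beta})=g_{\alpha}$ --- exactly your ``each $T_{\alpha}$ is a singleton'' statement unwound.

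The one substantive difference is your parenthetical remark: applying Theorem~1.1 to the inverse system $\{T_{\alpha}\}$ of transporter sets yields injectivity \emph{without} the hypotheses on a least element, freeness, or injectivity of $\nu_{\lambda}^{\alpha}$. This is correct (the $T_{\alpha}$ are non-empty closed subsets of the compact Hausdorff groups $G_{\alpha}$, and equivariance gives $\nu_{\alpha}^{\beta}(T_{\beta})\subseteq T_{\alpha}$), and it genuinely strengthens the paper's theorem: Lemma~2.2 alone, together with this compactness argument, already gives the homeomorphism. The paper's extra hypotheses buy a more explicit, choice-free description of the group element, but are not needed for the bare existence statement.
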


First we prove the following simple lemma.
\begin{lemma}\label{lemma2.2}
Let $ \{X_{\alpha}, \pi_{\alpha}^{ \beta}, \Lambda \}$ be an inverse system of non-empty compact Hausdorff topological spaces and let $ \{G_{\alpha}, \nu_{\alpha}^{ \beta},\Lambda\}$ be an inverse system of compact topological groups, where each $X_{\alpha}$ is a $G_{\alpha}$ -space and each bonding map $\pi_{\alpha }^{\beta}$ is $\nu_{\alpha }^{\beta}$-equivariant. Then there is a closed continuous surjection $$\psi :(\varprojlim X_{\alpha})/(\varprojlim G_{\alpha}) \to \varprojlim (X_{\alpha}/G_{\alpha}).$$
\end{lemma}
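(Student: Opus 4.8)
The plan is to construct the map $\psi$ using the universal property of the inverse limit (the second cited theorem), then verify surjectivity and the closed map property using compactness.

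First I would construct $\psi$. For each $\alpha \in \Lambda$, consider the composite $\varprojlim X_\alpha \xrightarrow{\pi_\alpha} X_\alpha \to X_\alpha/G_\alpha$; call it $\varphi_\alpha$. This is continuous, and it is constant on $\varprojlim G_\alpha$-orbits: if $(g_\beta) \in \varprojlim G_\alpha$ then $\varphi_\alpha\big((g_\beta)\cdot(x_\beta)\big)$ is the orbit of $g_\alpha \cdot x_\alpha$, which equals the orbit of $x_\alpha$. Hence $\varphi_\alpha$ descends to a continuous map $\widetilde{\varphi}_\alpha : (\varprojlim X_\alpha)/(\varprojlim G_\alpha) \to X_\alpha/G_\alpha$. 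One then checks the compatibility $\overline{\pi_\alpha^\beta} \circ \widetilde{\varphi}_\beta = \widetilde{\varphi}_\alpha$ for $\alpha < \beta$, which is immediate from $\pi_\alpha^\beta \circ \pi_\beta = \pi_\alpha$ and the definition of $\overline{\pi_\alpha^\beta}$. By the universal property of the inverse limit, these assemble into a unique continuous map $\psi : (\varprojlim X_\alpha)/(\varprojlim G_\alpha) \to \varprojlim(X_\alpha/G_\alpha)$, namely $\psi\big(\overline{(x_\alpha)}\big) = \big(\overline{x_\alpha}\big)$.

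Next, the closed map property comes essentially for free: by the first cited theorem $\varprojlim X_\alpha$ is compact Hausdorff, and by the third cited theorem (Bredon) $(\varprojlim X_\alpha)/(\varprojlim G_\alpha)$ is compact Hausdorff, since $\varprojlim G_\alpha$ is a compact group (inverse limit of compact groups is compact, being closed in the product). Again by the first and third cited theorems, $\varprojlim(X_\alpha/G_\alpha)$ is compact Hausdorff — here one needs that each $X_\alpha/G_\alpha$ is non-empty compact Hausdorff, which is Bredon's theorem, and that $\{X_\alpha/G_\alpha, \overline{\pi_\alpha^\beta}, \Lambda\}$ is a genuine inverse system, noted in the introduction. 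A continuous map from a compact space to a Hausdorff space is closed, so $\psi$ is closed.

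The main work is surjectivity of $\psi$. Given a compatible thread $\big(\overline{x_\alpha}\big) \in \varprojlim(X_\alpha/G_\alpha)$, so that $\overline{\pi_\alpha^\beta}(\overline{x_\beta}) = \overline{x_\alpha}$, I must produce a genuine thread $(y_\alpha) \in \varprojlim X_\alpha$ with $y_\alpha \in \overline{x_\alpha}$ for each $\alpha$. The condition $\overline{\pi_\alpha^\beta}(\overline{x_\beta}) = \overline{x_\alpha}$ only says $\pi_\alpha^\beta(x_\beta)$ lies in the orbit of $x_\alpha$, i.e. $\pi_\alpha^\beta(x_\beta) = h_{\alpha}^{\beta}\cdot x_\alpha$ for some $h_\alpha^\beta \in G_\alpha$; it does not directly give a coherent choice. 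The standard device is a compactness argument: for each $\alpha$ let $C_\alpha = \pi_\alpha^{-1}(\overline{x_\alpha}) \subseteq \varprojlim X_\beta$, where $\overline{x_\alpha} \subseteq X_\alpha$ is the (compact) orbit. Each $C_\alpha$ is closed, hence compact; one checks the family $\{C_\alpha\}$ has the finite intersection property — using that $\Lambda$ is directed, finitely many indices are dominated by a single $\beta$, and one must show $C_\beta$ is non-empty and maps appropriately, which is where the compatibility of the thread and the equivariance of the bonding maps enter. Then $\bigcap_\alpha C_\alpha \neq \emptyset$ yields the desired $(y_\alpha)$, and $\psi\big(\overline{(y_\alpha)}\big) = \big(\overline{y_\alpha}\big) = \big(\overline{x_\alpha}\big)$. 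I expect the delicate point to be verifying non-emptiness of the relevant sets, i.e. that over each finite (hence bounded) sub-collection of indices one can solve the orbit-coherence equations simultaneously inside $X_\beta$; this is exactly where one uses that the orbits are compact and the $G_\alpha$ are compact, so that the set of "partial threads matching the given orbit data up to level $\beta$" is a non-empty compact set. (Note that the freeness and injectivity hypotheses of the theorem are not needed for this lemma — they will only be used afterward to upgrade $\psi$ to a homeomorphism by proving injectivity.)
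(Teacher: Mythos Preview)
Your construction of $\psi$ and the closed-map argument match the paper's proof exactly. The only divergence is surjectivity: the paper disposes of it in one word (``Clearly $\psi$ is surjective''), whereas you devote most of your proposal to it.

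Your instinct that surjectivity deserves an argument is sound, but the formulation you give has a small circularity. You set $C_\alpha = \pi_\alpha^{-1}(\overline{x_\alpha})$ \emph{inside} $\varprojlim X_\beta$ and then, for the finite-intersection step, need each $C_\beta$ to be non-empty; but that already asks for a full thread in $\varprojlim X_\gamma$ landing in the orbit $\overline{x_\beta}$, which is essentially the surjectivity statement itself. The clean fix is to note that the orbits themselves form a sub-inverse-system: equivariance of $\pi_\alpha^\beta$ together with the compatibility $\overline{\pi_\alpha^\beta(x_\beta)} = \overline{x_\alpha}$ shows $\pi_\alpha^\beta(\overline{x_\beta}) \subseteq \overline{x_\alpha}$, so $\{\overline{x_\alpha}, \pi_\alpha^\beta|_{\overline{x_\beta}}, \Lambda\}$ is an inverse system of non-empty compact Hausdorff spaces. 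Theorem~1.1 then gives $\varprojlim \overline{x_\alpha} \neq \emptyset$, and any point of it is a thread $(y_\alpha) \in \varprojlim X_\alpha$ with $\overline{y_\alpha} = \overline{x_\alpha}$ for every $\alpha$. This is presumably what the paper's ``clearly'' abbreviates, and it replaces your finite-intersection argument entirely.
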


\begin{proof}
Let $X=\varprojlim X_{\alpha}$ and $G=\varprojlim G_{\alpha}$. Let $\pi_{\beta}:X \to X_{\beta}$ and $\nu_{\beta}:G \to G_{\beta}$ be the canonical projections for each $\beta \in \Lambda$. Clearly each $\pi_{\beta}$ is $\nu_{\beta}$-equivariant and therefore induces a continuous map $\psi_{\beta}:X/G \to X_{\beta}/G_{\beta}$ given by $\overline{(x_{\alpha})} \mapsto \overline{x_{\beta}}$ (note that $\psi_{\beta}= \overline{\pi_{\beta}}$). Also, observe that for $\gamma < \beta$, the diagram
$$
\xymatrix{
X/G \ar[d]^{\psi_{\gamma}} \ar[r]^{\psi_{\beta}} & X_{\beta}/G_{\beta} \ar[ld]^{\overline{\pi_{\gamma}^{ \beta}}} \\
X_{\gamma}/G_{\gamma}}
$$
commutes. Therefore, by Theorem \ref{theorem1.2}, we have the continuous map $$\psi:X/G \to \varprojlim (X_{\alpha}/G_{\alpha})$$ given by $\overline{(x_{\alpha})} \mapsto (\overline{x_{\alpha}})$. Clearly $\psi$ is surjective. By Theorem \ref{theorem1.1} and Theorem \ref{theorem1.3}, we see that $X/G$ is compact and $\varprojlim (X_{\alpha}/G_{\alpha})$ is Hausdorff. Therefore $\psi$ is a closed map. This completes the proof of the lemma.
\end{proof}

We now complete the proof of Theorem \ref{theorem2.1}.
\begin{proof}
It just remains to show that the map $\psi$ is injective. Suppose $(\overline{x_{\alpha}})=(\overline{y_{\alpha}})$. This means $\overline{x_{\alpha}}=\overline{y_{\alpha}}$ for each $\alpha \in \Lambda$. Thus for $\alpha \in \Lambda$, we have $x_{\alpha}=g_{\alpha} y_{\alpha}$ and $x_{\lambda}=g_{\lambda}  y_{\lambda}$ for some $g_{\alpha} \in G_{\alpha}$ and $g_{\lambda} \in G_{\lambda}$. Since $\lambda \in \Lambda$ is the least element, we have $\lambda < \alpha$ and hence $\pi_{\lambda}^{\alpha}(x_{\alpha})=x_{\lambda}$ and  $\pi_{\lambda}^{\alpha}(y_{\alpha})=y_{\lambda}$. This gives $x_{\lambda}=\pi_{\lambda}^{\alpha}(x_{\alpha})=  \pi_{\lambda}^{\alpha}(g_{\alpha}  y_{\alpha})= \nu_{\lambda}^{\alpha}(g_{\alpha}) \pi_{\lambda}^{\alpha}(y_{\alpha})=\nu_{\lambda}^{\alpha}(g_{\alpha}) y_{\lambda}$ and hence $g_{\lambda} y_{\lambda}=\nu_{\lambda}^{\alpha}(g_{\alpha}) y_{\lambda}$. The freeness of $G_{\lambda}$ action on $X_{\lambda}$ implies $g_{\lambda}=\nu_{\lambda}^{\alpha}(g_{\alpha})$. Now, for $\lambda <\alpha < \beta$ we have $\nu_{\lambda}^{\alpha} \nu_{\alpha}^{\beta}(g_{\beta})=\nu_{\lambda}^{\beta}(g_{\beta})=g_{\lambda}$. By injectivity of $\nu_{\lambda}^{\alpha}$ we get $\nu_{\alpha}^{\beta}(g_{\beta})=g_{\alpha}$. Thus $(g_{\alpha}) \in G$ and $(x_{\alpha})=(g_{\alpha}  y_{\alpha})=(g_{\alpha})  (y_{\alpha})$. This shows that $\psi$ is injective and hence a homeomorphism by Lemma \ref{lemma2.2}.
\end{proof}

As a consequence we have the following corollary.
\begin{corollary}
Let $G$ be a compact topological group acting freely on a compact Hausdorff topological space $X$ and let $f:X \to X$ be a $G$-equivariant map. Then for the inverse system $$\cdots \stackrel{f}\rightarrow X \stackrel{f}\rightarrow X \stackrel{f}\rightarrow X$$ $(\varprojlim X)/G$ is homeomorphic to $\varprojlim(X/G)$.
\end{corollary}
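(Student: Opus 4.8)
The plan is to deduce this directly from Theorem 2.1 by realizing the given data as a suitable inverse system indexed by $\Lambda=\mathbb{N}$ with its usual order. First I would set $X_n=X$ for every $n\in\mathbb{N}$ and take as bonding maps the iterates $\pi_n^m=f^{\,m-n}:X_m\to X_n$ for $n<m$; the identities $\pi_n^n=\mathrm{id}$ and $\pi_n^m\pi_m^k=\pi_n^k$ are immediate from $f^0=\mathrm{id}$ and the additivity of exponents, so $\{X_n,\pi_n^m,\mathbb{N}\}$ is an inverse system of non-empty compact Hausdorff spaces (one may assume $X\neq\emptyset$, the case $X=\emptyset$ being vacuous). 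Next I would put $G_n=G$ for every $n$ with all bonding homomorphisms $\nu_n^m=\mathrm{id}_G$; this is trivially an inverse system of compact topological groups. Since $f$ is $G$-equivariant, so is every iterate $f^{\,m-n}$, which says exactly that $\pi_n^m$ is $\nu_n^m$-equivariant because $\nu_n^m$ is the identity.

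With this set-up the hypotheses of Theorem 2.1 are met: $\mathbb{N}$ has least element, say $\lambda=1$; the action of $G_\lambda=G$ on $X_\lambda=X$ is free by assumption; and each bonding map $\nu_\lambda^{\alpha}=\mathrm{id}_G$ is injective. Hence Theorem 2.1 provides a homeomorphism $\psi:(\varprojlim X_n)/(\varprojlim G_n)\to\varprojlim(X_n/G_n)$.

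It then remains only to identify the three terms with the objects in the statement of the corollary. Because every $\nu_n^m$ is the identity, $\varprojlim G_n$ consists precisely of the constant sequences $(g,g,g,\dots)$, so $g\mapsto(g,g,\dots)$ is an isomorphism of topological groups $G\xrightarrow{\ \cong\ }\varprojlim G_n$, and under this identification the action $(g_n)\cdot(x_n)=(g_n\cdot x_n)$ on $\varprojlim X_n$ becomes the diagonal action $g\cdot(x_n)=(g\cdot x_n)$; consequently $(\varprojlim X_n)/(\varprojlim G_n)=(\varprojlim X)/G$. Similarly $X_n/G_n=X/G$ for every $n$, and $\overline{\pi_n^m}=\overline{f^{\,m-n}}=\overline{f}^{\,m-n}$, so $\varprojlim(X_n/G_n)$ is exactly $\varprojlim(X/G)$ for the inverse system $\cdots\xrightarrow{\overline{f}}X/G\xrightarrow{\overline{f}}X/G$. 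Composing $\psi$ with these identifications gives the asserted homeomorphism $(\varprojlim X)/G\cong\varprojlim(X/G)$.

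I do not expect a genuine obstacle here: the argument is a bookkeeping translation of Theorem 2.1. The one point deserving a moment's care is the verification that the inverse limit of the constant group system with identity bonding maps is $G$ itself, carrying its diagonal action on $\varprojlim X$ — this is what legitimizes replacing the single group $G$ by the inverse system $\{G_n,\nu_n^m,\mathbb{N}\}$ needed to invoke the theorem.
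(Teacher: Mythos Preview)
Your proposal is correct and follows exactly the approach intended by the paper, which states the corollary as an immediate consequence of Theorem~2.1 without spelling out a proof. You have simply supplied the natural details: take $\Lambda=\mathbb{N}$, constant systems $X_n=X$ and $G_n=G$ with bonding maps $f^{m-n}$ and $\mathrm{id}_G$, verify the hypotheses of Theorem~2.1, and identify $\varprojlim G_n\cong G$ acting diagonally.
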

\bigskip

\begin{acknowledgement}
The author would like to thank the referees for comments which improved the presentation of the note.
\end{acknowledgement}
\bigskip

\bibliographystyle{amsplain}

\end{document}